\def\O{\dagger}
\def\hx{\hat{x}}
\def\hxj{\hat{x}^j}
\def\hxjp{\hat{x}^{j+1}}
\def\hy{\hat{y}}
\def\hyj{\hat{y}^j}
\def\hyjp{\hat{y}^{j+1}}
\def\S{\mathcal{S}}
\DeclareMathOperator{\diag}{diag}
\newtheorem{theorem}{Theorem}
\newtheorem{lemma}{Lemma}
\newtheorem{assumption}{Assumption}
\title{\LARGE \bf
ADMM for MPC with state and input constraints, and input nonlinearity
}
\author{Sebastian East$^{1}$,~~Mark Cannon$^{1}$
\thanks{$^{1}$Department of Engineering Science, University
  of Oxford, OX1\,3PJ, UK}}
\begin{document}
\maketitle
\thispagestyle{empty}
\pagestyle{empty}

\begin{abstract}
In this paper we propose an Alternating Direction Method of Multipliers (ADMM) algorithm for solving a Model Predictive Control (MPC) optimization problem, in which the system has state and input constraints and a nonlinear input map. The resulting optimization is nonconvex, and we provide a proof of convergence to a point satisfying necessary conditions for optimality. This general method is proposed as a solution for blended mode control of hybrid electric vehicles, to allow optimization in real time. To demonstrate the properties of the algorithm we conduct numerical experiments on randomly generated problems, and show that the algorithm is effective for achieving an approximate solution, but has limitations when an exact solution is required.
\end{abstract}

\section{INTRODUCTION}

In Model Predictive Control (MPC), control inputs are optimized by solving an open-loop optimal control problem over a finite prediction horizon, whilst explicitly accounting for constraints on system states and inputs. This approach is widely used in process control applications, in which slow plant dynamics allow the optimization to be solved in real time, and the control of mechanical and electrical systems is becoming more feasible with improvements in embedded controller hardware \cite{Lee2011}. To further increase MPC applicability, recent research has considered exploiting properties of optimization problems to improve computational efficiency.

Active set \cite{Ferreau2008} and interior point methods \cite{Domahidi2012} are commonly employed in MPC, but for reduced computational complexity first order approaches such as fast gradient methods \cite{Klintberg2017} and alternating direction method of multipliers (ADMM) \cite{Thuy2017,Dang2015} have received recent attention. ADMM has been shown to be particularly effective in distributed applications (D-ADMM) \cite{Mota2012,Costa2014} where the separability of the cost function can be leveraged. This paper likewise exploits separability of the cost and constraints, although distributed optimization and control are not considered here. We refer the reader to~\cite{Boyd2011} for a full description of ADMM and a survey of applications.

This paper is motivated by the problem of minimizing fuel consumption in a hybrid electric vehicle, achieved by distributing the demanded load between two available power sources (an internal combustion engine and an electric motor) in a blended mode \cite{Sciarretta2007}. The problem is constrained by limits on available power and bounds on the battery state of charge, and MPC is a suitable framework since its feedback mechanism provides a degree of robustness to discrepancies between the predicted and actual power demanded. Although dynamic programming is typically used for the solution of the corresponding optimization \cite{Back2002,Larsson2015}, the computation required for a sufficiently accurate solution prohibits a real-time implementation. A projected Newton method for solving this problem subject to a terminal state of charge constraint was proposed in~\cite{Buerger18}, but the method becomes intractable for the general case of constraints on the state charge at each instant on a prediction horizon. Here, we use ADMM to leverage a separable cost function to enforce constraints at all  future time-steps.

The paper is organised as follows: Section 2 gives the problem definition; Section 3 states the proposed ADMM algorithm; Section 4 describes the energy management problem more fully and discusses its solution for randomly generated examples; Section 5 provides conclusions. Convergence and optimality analyses are included in the appendix.
 
\section{PROBLEM DEFINITION}

Consider a system represented by the discrete time model
\begin{equation*}
x_{k+1} = A_kx_k+b_k(u_k)
\end{equation*}
where $x_k \in \mathbb{R}^{n_x}$ is the system state, $u_k \in \mathbb{R}^{n_u}$ is the control input, $A_k\in \mathbb{R}^{n_x\times n_x}$, and $b_k(\cdot)$ is convex. The states and control inputs are subject to elementwise bounds and the associated optimal control problem is defined 
\begin{equation}\label{eqn_problem_def}
\begin{aligned}
&\underset{x,u}{\text{minimize}} && f(x) + g(u)\\
& \text{subject to} && \begin{rcases} x_{k+1} = A_kx_k+b_k(u_k)  \\
\underline{x}_{k+1} \leq x_{k+1} \leq \overline{x}_{k+1} \\
\underline{u}_k \leq u_k \leq \overline{u}_k
 \end{rcases} 
k = 0,\dots,N-1
\end{aligned}
\end{equation}
where $x=(x_1,\dots,x_N)$ is the vector of future states over a prediction horizon of $N$ steps, $u=(u_0,\dots,u_{N-1})$ is the vector of control inputs over the prediction horizon, and $f,g$ are state and control cost functions. It is assumed that $f$ and $g$ are convex and separable. We say that a function $a(c)$, where $c = (c_1,\dots,c_{N_c})$, is separable if
\begin{equation*}
a(c) = \sum_{i=1}^{N_c} a_i(c_i) .
\end{equation*}
Note that problem~(\ref{eqn_problem_def}) is in general nonconvex due to the presence of nonlinear dynamics and state constraints.

\textit{Equality constraints:} $x$ and $x_k$ can be written in terms of
$b(u)=[b_0^\top(u_0)\ \cdots \ b_{N-1}^\top(u_{N-1})]^\top$ as
\begin{equation}\label{eqn_whole_state}
\begin{aligned}
x &= \Phi x_0 + \Psi b(u) \\
x_{k} &= \Phi_k x_0 + \Psi_k b(u) 
\end{aligned}
\end{equation}
 where $\Phi_k$ and $\Psi_k$ are the $k$th block rows of $\Phi$ and $\Psi$:
\begin{equation*}
\Phi = \begin{bmatrix} A_0 \\
\vdots\\
~\prod_{k=0}^{N-1}A_k ~\end{bmatrix} ,
\quad
\Psi = \begin{bmatrix} I & \dots & 0\\
\vdots & \ddots & \vdots\\
~\prod_{k=1}^{N-1}A_k & \dots & I ~\end{bmatrix} .
\end{equation*}
Here $\Psi\in\mathbb{R}^{N n_x \times N n_x}$ is lower-triangular with $ij$th block given by $\prod_{k=j}^{i-1}A_k$ for $j<i$ and by the identity matrix for $i=j$, and the $i$th block-row of $\Phi\in\mathbb{R}^{N n_x \times n_x}$ is $\prod_{k=0}^{i-1}A_k$.


\textit{Inequality constraints:}
Define elementwise indicator functions $h_k^x$,  $h_k^u$ and $h^x$, $h^u$ as
\begin{alignat*}{2}
h_k^x(x_{k}) &= \begin{cases} 0 & x_{k} \in [\underline{x}_{k},\overline{x}_{k}] \\ \infty  & \text{otherwise} \end{cases}
\qquad
h^x(x)&=\sum_{k=1}^{N}h_k^x(x_{k}) , \\
h_k^u(u_k) &= \begin{cases} 0 & u_k \in [\underline{u}_{k},\overline{u}_{k}] \\ \infty  & \text{otherwise} \end{cases}
\qquad
h^u(u) &= \sum_{k=0}^{N-1}h_k^u(u_k) .
\end{alignat*}
Using these definitions we rewrite (\ref{eqn_problem_def}) in a more convenient form without explicit inequality constraints as 
\begin{equation}\label{eqn_problem_def_simplified2}
  \begin{aligned}
    & \underset{x,u}{\text{minimize}} & & f(x) + g(u) + h^x(x) + h^u(u) \\
    & \text{subject to} & & x = \Phi x_0 + \Psi b(u) .
  \end{aligned}
\end{equation}

\section{ADMM ALGORITHM}
Introducing $v$ as a substitute for $b(u)$, we rewrite~(\ref{eqn_problem_def_simplified2}) as
\begin{equation}\label{eqn_minimisation_ADMM}
\begin{aligned}
&\underset{u,x,v}{\text{minimize}} && f(x) + g(u) + h^x(x) + h^u(u)\\
& \text{subject to} &&  \Phi x_0 + \Psi v - x = 0 \\
&&& b(u) - v = 0 .
\end{aligned}
\end{equation}
The associated augmented Lagrangian function is 
\begin{multline}\label{eqn_augmented_lagrangian}
L(u,v,x,y,z) = f(x) + g(u) + h^x(x) + h^u(u) \\
+\frac{\rho_1}{2} \|b(u) - v +y \|^2 + \frac{\rho_2}{2}\|\Phi x_0 + \Psi v - x +z \|^2 , \end{multline}
and the ADMM iteration is obtained (see e.g.~\cite{Boyd2011}) as
\begin{alignat*}{2}
&u_k^{j+1} &\!\!& =  \arg\underset{u_k}\min \Bigl( h_k^u(u_k) + g_k(u_k) + \frac{\rho_1}{2} (b_k(u_k) \!-\! v^j_k \!+\! y^j_k )^2 \Bigr) \\
&v^{j+1} &\!\!& = \begin{aligned}[t] \arg \underset{v} \min \Bigl( \frac{\rho_1}{2} \|b(u^{j+1}) &- v +y^j \|^2 \\ 
&\!\! + \frac{\rho_2}{2}\|\Phi x_0 + \Psi v - x^j +z^j \|^2 \Bigr) \end{aligned} \\
&x_{k+1}^{j+1} &\!\!& = \begin{aligned}[t] \arg \underset{x_{k+1}} \min \Bigl(
& h_k^x(x_{k+1}) + f_{k+1}(x_{k+1}) \\
& \!\! +\frac{\rho_2}{2}(\Phi_{k+1} x_0 + \Psi_{k+1} v^{j+1}\!- x_{k+1} \!+ \! z^j_k )^2 \Bigr) \end{aligned} \\
&y^{j+1} &\!\!& = y^j + b(u^{j+1}) - v^{j+1} \\
&z^{j+1} &\!\!& = z^j + \Phi x_0 + \Psi v^{j+1}-x^{j+1}
\end{alignat*}
for $k = 0,\dots,N-1$, where $j$ is the iteration counter. We assume that a suitable solver is available for updating $u^{j+1}$ and $x^{j+1}$. For the application considered in Section~\ref{section_simulations} analytical solutions exist for the minimizers $u_k^{j+1}$ and $x_{k+1}^{j+1}$, which are computed by finding the roots of cubic equations and projecting these onto the inequality constraints in~(\ref{eqn_problem_def}).

The update for $v$ can be written explicitly as
\[
  \begin{aligned}
    v^{j+1} =  \left( \rho_1I+\rho_2 \Psi^\top \Psi \right)^{-1}\bigl[ & \rho_1(b(u^{j+1}) + y^j) \\
    &+ \rho_2\Psi^\top(-\Phi x_0 +x^j-z^j)\bigr] .
  \end{aligned}
\]
For the application considered in Section \ref{section_simulations}, where ${A_k = 1}$, the matrix $\Psi$ is a lower triangular matrix of 1s, so $(\rho_1I+\rho_2\Psi^\top\Psi)^{-1}$ becomes a linear time-invariant filter. Approximating this matrix by setting the elements that do not exceed a given threshold to zero then yields a banded matrix, making the computation of $v^{j+1}$ straightforward. For the general case in which the matrices $A_k$ defining $\Psi$ are arbitrary, the results in \cite{Klintberg2017} and \cite{Demko1984} provide conditions under which $(\rho_1 I + \rho_2 \Psi^\top \Psi)^{-1}$ is approximately banded and bound the rates of decay of elements with distance from the diagonal. 

The iteration is terminated when primal and dual residual variables have fallen below pre-defined thresholds,
$\epsilon^{\mathrm{primal}}$ and $\epsilon^{\mathrm{dual}}$,
chosen based on the required accuracy of the optimization and the typical magnitudes of decision variables:
\[
\|r^{j+1} \|_2 \leq \epsilon^{\mathrm{primal}} , \quad
\|s^{j+1} \|_2 \leq \epsilon^{\mathrm{dual}} .
\]
In the appendix we analyse the optimality and convergence properties of the algorithm, and provide definitions of the residuals $r^{j}$ and $s^j$. 
We also give conditions for the algorithm to converge to the global minimum, despite the problem being nonconvex, however this condition cannot generally be determined \textit{a priori}, and the algorithm may converge to a local minimum, or even a maximum, if it is not met.

\section{APPLICATION TO PHEV SUPERVISORY CONTROL}\label{section_simulations}
We are interested in the problem of minimizing the fuel consumption of a plug-in hybrid electric vehicle (PHEV) by controlling the power balance between the electric motor and internal combustion engine in a blended mode. The power, $d$, demanded by the driver is split between the motor $m$ and engine $u$ so that $u+m = d$. The system can be modelled as 
\begin{equation*}
x_{k+1} = x_k+b_k(u_k)
\end{equation*}
where $x_k$ is the state of charge of the battery, $b_k$ accounts for the charge  loss-dynamics, and the engine power $u_k$ is the control input (i.e.~$n_x=n_u = 1$ and $A_k = 1$). The fuel minimization problem over a horizon of $N$ time steps is 
\begin{equation}\label{eq:phev_opt}
\begin{aligned}
&\underset{u}{\text{minimize}} &&  \sum_{k=0}^{N-1} g_k(u_k) \\
& \text{subject to}  && \begin{rcases} x_{k+1} = x_k+b_k(u_k) \\
\underline{x}_{k+1} \leq x_{k+1} \leq \overline{x}_{k+1} \\
\underline{u}_k \leq u_k \leq \overline{u}_k
\end{rcases} k = 0,\dots,N-1
\end{aligned}
\end{equation}
where $g_k$ is the fuel consumption determined from a quasi-static model at time-step $k$; $\overline{u}_{k}$ and $\underline{u}_{k}$ represent upper and lower limits on the engine power  output; and $\underline{x}_{k+1}$ and $\overline{x}_{k+1}$ represent the bounds on state of charge. The cost and input loss functions are represented by time-varying quadratic functions as
\begin{equation*}
\begin{aligned}
g_k(u_k) &= \alpha_{2,k}u_k^2 + \alpha_{1,k}u_k + \alpha_{0,k} \\
b_k(u_k) &= -\beta_{2,k}(d_k - u_k)^2 - \beta_{1,k}(d_k - u_k) - \beta_{0,k} 
\end{aligned}
\end{equation*}
See \cite{Buerger18} for details on how these functions are obtained from the fuel map and electrical loss maps for a particular PHEV.

To demonstrate the performance of the ADMM algorithm without reference to a particular PHEV powertrain, we generate random sets of nominal systems of the same form as the motivating problem in~(\ref{eq:phev_opt}). We generate uniformly distributed random disturbance values $d_k \in [-1,1]$; uniformly distributed $\alpha$ and $\beta$ coefficients $\alpha_{2,k},\beta_{2,k} \in [0,0.1]$, $\alpha_{1,k},\beta_{1,k} \in [0,1]$, and $\alpha_{0,k},\beta_{0,k} =0$, and a uniformly distributed initial state $x_0 \in [-0.5,0.5]$. We use $\underline{u}_{k} =-0.5$ and $\overline{u}_{k} =0.5$ as control input limits, and state limits of $\underline{x}_k = -2$ and $\overline{x}_k = 2$. The ADMM iteration is initialised assuming that the inequality constraints in~(\ref{eq:phev_opt}) are inactive:
\begin{align*}
u_k^0 &=  \pi_k^u\Bigl[ \arg\underset{u_k}\min \hspace{2mm}  g_k(u_k) \Bigr], 
\quad
& v^0 & = b(u^0),
\\
x_{k+1}^0 &= \pi _{k+1} ^x \Bigl[  \Phi_{k+1} x_0 + \Psi_{k+1} v^0   \Bigr],
\quad
& y^0 & = 0,
\quad 
z^0 = 0,
\end{align*}
for $k = 0,\dots,N-1$, 
where $\pi_k^u$, $\pi_k^x$ are projection operators:
\begin{align*}
\pi_k^u(u_k) &= \min\bigl\{\overline{u}_k,\max\{\underline{u}_k,u_k \}\bigr\}  \\
\pi_k^x(x_k) &=\min\bigl\{\overline{x}_{k} ,  \max\{\underline{x}_{k}, x_k\}\bigr\} .
\end{align*}
If $\Phi x_0 + \Psi v^0 = x^0$, then these initial values are optimal and the state constraints are inactive. The stopping conditions used are equal i.e $\epsilon^{\mathrm{primal}}=\epsilon^{\mathrm{dual}}=\epsilon$. The simulations are run in Matlab, using an Intel 2.60 GHz i7-6700HQ CPU.

\subsection{Variation of iteration count with $\rho$}
The rate of convergence of the ADMM algorithm is affected by the values of $\rho_1$ and $\rho_2$. For the algorithm to be practically useful, it is desirable for the optimal values for these parameters to be independent of both horizon length and system parameters, so that a single set of values can be used in all conditions. Here we demonstrate the effect of $\rho_1$ and $\rho_2$ on the convergence properties of the algorithm for 20 randomly generated systems. Figure~\ref{fig_3a} shows the average numbers of iterations required for completion of the algorithm for horizon lengths of 50, 100, 200, and 400 with $\epsilon = 0.001$, $10^{-1}\leq \rho_1\leq 10^2$ and $10^{-2}\leq \rho_2\leq 10$.


For each horizon length, the minimum number of iterations for convergence is obtained with $1 \leq \rho_1 \leq 10$ and $0.1 \leq \rho_2 \leq 1$. Moreover, variations in $\rho$ within an order of magnitude of this minimum do not produce large increases in total number of iterations, implying a degree of robustness. Therefore $\rho$ values within these ranges provide close to the minimum number of iterations for horizon lengths between 50 and 400, suggesting that a single set of values is appropriate for the PHEV problem.
Although the optimal $\rho$ values are similar, the minimum number of iterations for completion increases significantly with horizon length. This is explored further in the following sections.

\begin{figure}[h!]
\centerline{\includegraphics[scale=0.72]{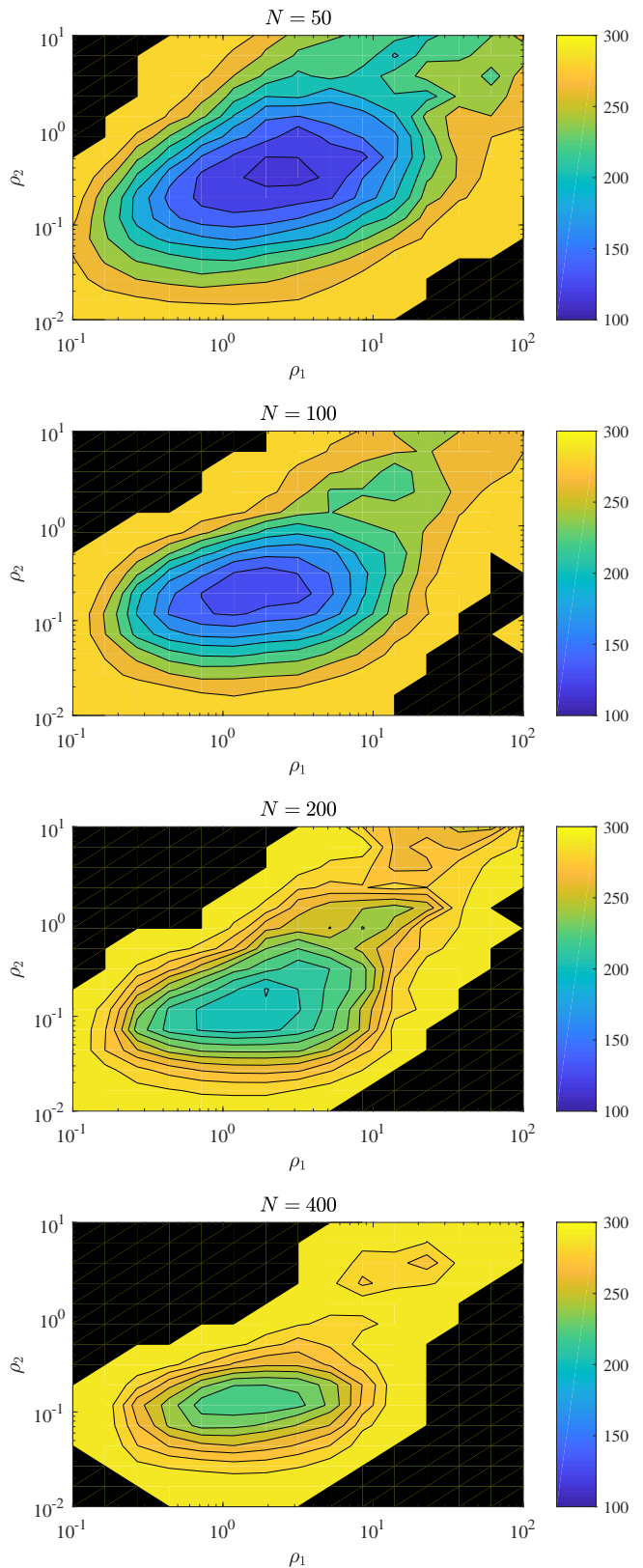}}
\vspace{-3mm}
\caption{Variation with $\rho_1,\rho_2$ of the number of iterations before termination. The data presented is the arithmetic mean for 20 generated systems. }
\label{fig_3a}
\vspace{-5mm}
\end{figure}

\subsection{Variation of required number of iterations with $\epsilon$}
The threshold $\epsilon$ determines the degree of optimality at termination, since $(u,x) \rightarrow (u^*,x^*)$ as $\epsilon \rightarrow 0$ (see Appendix A). However, a trade-off exists between accuracy and computation since small values of $\epsilon$ require many iterations. Here we investigate how the number of iterations before termination varies with $\epsilon$ for 200 randomly generated problems with $N=100$. Figure~\ref{fig_1} shows the number of iterations required with $10^{-4}\leq\epsilon\leq 10^0$, $\rho_1=1$, and $\rho_2=0.2$.

Clearly, for a low-accuracy solution only a few tens of iterations are required, but this rises rapidly for high-accuracy solutions. For example, with $\epsilon =0.14$  the median iteration count is 29, but $\epsilon = 10^{-4}$ requires a median iteration count of 210. Furthermore, the increase in iteration count is highly nonlinear as $\epsilon$ is reduced, and for small $\epsilon$ modest further reductions have a large effect on the iteration count. 

The variation in the number of iterations required increases as $\epsilon$ is reduced (Figure~\ref{fig_1}): the 98th percentile is 45 for $\epsilon =0.14$ ($\sim$ 2 times the median), rising to 2374 ($\sim$ 10 times the median) for $\epsilon = 10^{-4}$. Again, this increase in uncertainty increases nonlinearly as $\epsilon$ is reduced. Furthermore, the distribution of the data is heavily skewed, and outliers become more extreme as $\epsilon$ is reduced. This is problematic as the MPC optimization algorithm must be designed for the worst case, and at low values of $\epsilon$ such a high potential iteration count could be extremely prohibitive.

These properties suggest that the ADMM algorithm may be appropriate for finding an approximate solution to the PHEV problem that is then used to initialise another method, but inappropriate for obtaining an exact solution. For example, the ADMM algorithm could be used to obtain the active set, then an alternative method (such as that proposed in~\cite{Buerger18}) used to obtain the corresponding optimum. This observation is application-specific as large numbers of iterations may not be problematic if the system dynamics are sufficiently slow or computational resources are sufficiently high.

\begin{figure}
\vspace{2mm}
\centerline{\includegraphics[scale=0.675]{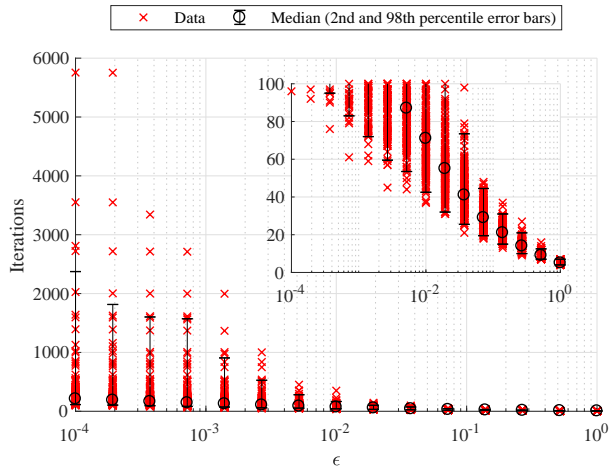}}
\vspace{-3mm}
\caption{Number of iterations required for completion as $\epsilon$ varies.}
\label{fig_1}
\vspace{-5mm}
\end{figure}

\subsection{Variation of computation with horizon length}
The MPC framework can be implemented in the PHEV energy management problem with either a receding horizon (with fixed $N$) or a shrinking horizon (with $N$ reducing as the vehicle progresses through a journey). Although the algorithm can be tuned for a specific horizon length in the receding horizon case, for a shrinking horizon implementation it must be robust to a range of horizon lengths. Thus, for a 1 hour journey sampled at 1\,s, $N$ varies between 1 and 3600. Here we demonstrate the effect of horizon length on number of iterations and time to termination for 200 systems, with $N$ varying between 10 and 400. We choose $\epsilon=10^{-2}$, and $\rho_1=1$, $\rho_2=0.2$. The results are shown in Figure \ref{fig_3}.

For small $N$, increases in horizon length produce large increases in the number of iterations (the median for $N=25$ is 22, increasing to 40 when $N = 75$), but this effect tapers off quickly, and the median iteration count is 105 for $N = 350$. There is no clear dependence on $N$ of the uncertainty in the required number of iterations; the 98th percentile has a maximum value of 273 iterations at $N=80$, a minimum of 85 at $N=10$, and large variations between these values.

Although the iteration count does not increase significantly with longer horizons, the total time taken to find the solution increases roughly linearly with horizon length, from a median of 0.0075\,s for $N=10$, to a median of 1.39\,s for $N=400$. The uncertainty also appears to increase within a near-linear envelope. This is due to the computation required to update $u^{j+1}$, for which $N$ cubic equations must be solved at each iteration. This suggests that for long prediction horizons, improving the solution times for $u^{j+1}$ and $x^{j+1}$ would have a significant effect on total solution time.
%
Note that the absolute computation times presented here demonstrate the relationship between horizon length and computation, but 
the hardware used for the simulations does not correlate to that typically found in a PHEV, and Matlab is not typically used for embedded computing.

\begin{figure}
\centering
\vspace{2mm}\includegraphics[scale=0.68]{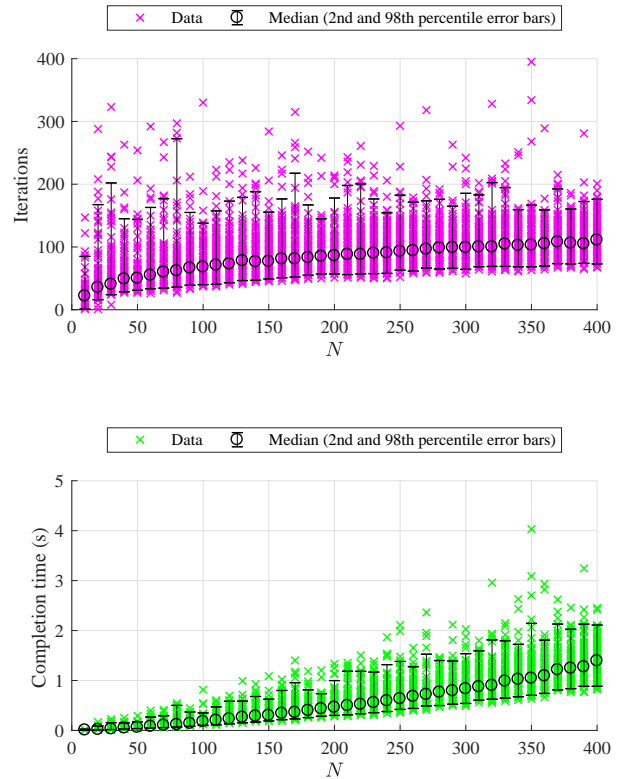}
\vspace{-3mm}
\caption{Iterations and time required for completion against $N$.}
\label{fig_3}
\vspace{-5mm}
\end{figure}

\section{CONCLUSIONS}

A general MPC optimization problem is considered for systems with separable cost functions, nonlinear input map and simple constraints, motivated by the blended mode energy management problem for PHEVs. We propose an ADMM algorithm for solving this problem and demonstrate conditions for convergence to a solution satisfying first order necessary conditions for optimality.
Numerical experiments show that an approximate solution is reached within a few tens of iterations, while significantly more are required for an exact solution. The effect of algorithm parameters on the number of iterations and solution times is also investigated.
Future work will investigate methods for implementing the proposed algorithm in a PHEV, to obtain globally optimal control inputs that are implementable in real time.  

\section*{Appendix}

\subsection{Optimality}
We use a similar method to that given in Section 3 of \cite{Boyd2011}, however we make use of an alternative Lyapunov function to demonstrate convergence, as required by the nonconvexity of the problem. Problem (\ref{eqn_minimisation_ADMM}) is first rewritten as 
\begin{equation}\label{eqn_proof_opt}
\begin{aligned}
& \underset{u}{\text{minimize}} \hspace{2mm} && \hat{f}(\hat{x}) + \hat{g}(u)  \\
& \text{subject to} && \hat{b}(u) + B\hat{x} = 0
\end{aligned}
\end{equation}
where
\begin{equation*}
\begin{aligned}
&\hat{f}(\hat{x}) = f(x) + h^x(x), \ \hat{g}(u) = g(u) + h^u(u)\\
&\hat{b}(u) = \left[ \begin{array}{c} b(u) \\ \Phi x_0 \end{array} \right], \ 
B = \left[ \begin{array}{cc} -I & 0 \\ \Psi & -I \end{array} \right], \
\hat{x} = \left[ \begin{array}{c} v \\ x \end{array} \right] .
\end{aligned}
\end{equation*}
Then, by expressing the augmented Lagrangian (\ref{eqn_augmented_lagrangian}) as
\begin{equation*}
  L(u,\hat{x},\hat{y}) = \hat{f}(\hat{x}) + \hat{g}(u) + \hat{y}^\top(\hat{b}(u)+B\hat{x})  + \tfrac{1}{2}\| \hat{b}(u)+B\hat{x} \|^2_R
\end{equation*}
where $\| w \|^2_R = w^\top R w$ for any vector $w$ of conformal dimensions, and defining $\hat{y} = [y^\top \ \ z^\top]^\top$, $R = \diag\{\rho_1I,\rho_2 I\}$,
the ADMM iteration becomes
\begin{subequations}\label{eqn_proof_iteration}
\begin{align}
  u^{j+1} &= \underset{u}{\arg \min} \Bigl\{ \hat{g}(u) + (\hat{y}^j)^\top(\hat{b}(u) + B\hat{x}^j)  \nonumber 
  \\
          &\hspace{30.5mm} + \tfrac{1}{2}\| \hat{b}(u)+B\hat{x}^j \|^2_R \Bigr\}
  \\
  \hat{x}^{j+1} &= \underset{\hat{x}}{\arg \min} \Bigl\{ \hat{f}(\hat{x}) + (\hat{y}^j)^\top(\hat{b}(u^{j+1}) + B\hat{x})  \nonumber
  \\
          &\hspace{27mm} + \tfrac{1}{2}\| \hat{b}(u^{j+1})+B\hat{x} \|^2_R \Bigr\}    \\
  \hat{y}^{j+1} & =  
                  \hat{y}^j + R(\hat{b}(u^{j+1})+B\hat{x}^{j+1})
\end{align}
\end{subequations}
\begin{assumption}
Slater's condition holds, i.e.\ $u,v,x$ exist such that
$\underline{u}_k  < u_k < \overline{u}_k$, $\underline{x}_{k+1} < x_{k+1} < \overline{x}_{k+1}$ for all $k$, where $x = \Phi x_0 + \Psi v$ and $v = b(u)$.

\end{assumption}

The first order conditions satisfied by a (possibly local, possibly maximal) optimal solution $(u^*,\hat{x}^*,\hat{y}^*)$ are
\begin{subequations} \label{subequations_opt}
\begin{align}
& 0 =\hat{b}(u^*) + B\hat{x}^* \label{subeqn_opt_1} \\ 
& 0 = \nabla \hat{g}(u^*) + [\partial\hat{b}(u^*)]^\top\hat{y}^* \label{subeqn_opt_2} \\ 
& 0 = \nabla \hat{f}(\hat{x}^*) + B^\top\hat{y}^* \label{subeqn_opt_3}
\end{align}
\end{subequations}
where $\nabla \hat{g}$, $\nabla \hat{f}$ are the gradients of $\hat{g}$, $\hat{f}$, and $\partial \hat{b}$ is the Jacobian matrix of $\hat{b}$.

\begin{lemma}\label{proposition_1}
\textit{$(u^{j+1},\hat{x}^{j+1},\hat{y}^{j+1})$ satisfies the first order necessary  optimality conditions~(\ref{subequations_opt}) iff $r^{j+1}=0$ and $s^{j+1}=0$.}
\end{lemma}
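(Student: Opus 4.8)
The plan is to obtain the stationarity (first-order) conditions of the three minimizations in the ADMM iteration~(\ref{eqn_proof_iteration}) and to match them, term by term, against the optimality conditions~(\ref{subequations_opt}). The crucial observation is that the stationarity condition of each subproblem is a \emph{necessary} condition for its minimizer, hence holds at $(u^{j+1},\hxjp)$ irrespective of whether the subproblem is convex; only this necessity will be used, so the possible nonconvexity of the $u$-update plays no role. Throughout, $\nabla\hat g$ and $\nabla\hat f$ are understood as subgradients of $g+h^u$ and $f+h^x$ and~(\ref{subequations_opt}) as inclusions $0\in(\cdot)$; Assumption~1 supplies the constraint qualification needed for the subdifferential sum rule, and I take $b$ (hence $\hat b$) to be differentiable so that $\nabla_u\!\bigl[(\hyj)^\top\hat b(u)\bigr]=[\partial\hat b(u)]^\top\hyj$ and $\nabla_u\!\bigl[\tfrac12\|\hat b(u)+B\hat x\|_R^2\bigr]=[\partial\hat b(u)]^\top R(\hat b(u)+B\hat x)$ --- which is the case in the quadratic application of Section~\ref{section_simulations}.

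First I would combine the $\hat x$-update with the dual update. Stationarity of the $\hat x$-subproblem in~(\ref{eqn_proof_iteration}) reads $0=\nabla\hat f(\hxjp)+B^\top\bigl(\hyj+R(\hat b(u^{j+1})+B\hxjp)\bigr)$, and the bracketed vector is precisely $\hyjp$ by the dual update; therefore this condition already coincides with~(\ref{subeqn_opt_3}), with no leftover residual term, for \emph{every} iterate. Rearranging the dual update also gives $\hat b(u^{j+1})+B\hxjp=R^{-1}(\hyjp-\hyj)$, so, writing the primal residual as $r^{j+1}=\hat b(u^{j+1})+B\hxjp$ (the stacked vector of $b(u^{j+1})-v^{j+1}$ and $\Phi x_0+\Psi v^{j+1}-x^{j+1}$ used in the stopping test, cf.~\cite{Boyd2011}), condition~(\ref{subeqn_opt_1}) holds if and only if $r^{j+1}=0$.

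It remains to treat the $u$-update. Its stationarity condition is $0=\nabla\hat g(u^{j+1})+[\partial\hat b(u^{j+1})]^\top\bigl(\hyj+R(\hat b(u^{j+1})+B\hxj)\bigr)$, which differs from~(\ref{subeqn_opt_2}) only because the penalty is evaluated at the \emph{old} $\hxj$: the subsequent $\hat x$-update replaces $\hxj$ by $\hxjp$, and one has $\hyj+R(\hat b(u^{j+1})+B\hxj)=\hyjp-RB(\hxjp-\hxj)$. Substituting and defining the dual residual $s^{j+1}=[\partial\hat b(u^{j+1})]^\top RB(\hxjp-\hxj)$, the $u$-update stationarity becomes $0=\nabla\hat g(u^{j+1})+[\partial\hat b(u^{j+1})]^\top\hyjp-s^{j+1}$, so~(\ref{subeqn_opt_2}) holds if and only if $s^{j+1}=0$. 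Assembling the three facts proves the claim: if $r^{j+1}=0$ and $s^{j+1}=0$ then~(\ref{subeqn_opt_1}), (\ref{subeqn_opt_2}) and~(\ref{subeqn_opt_3}) all hold; conversely, if $(u^{j+1},\hxjp,\hyjp)$ satisfies~(\ref{subequations_opt}) then~(\ref{subeqn_opt_1}) forces $r^{j+1}=0$, and subtracting~(\ref{subeqn_opt_2}) from the always-valid $u$-update stationarity (with the same subgradient of $\hat g$) forces $s^{j+1}=0$.

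I expect the main obstacle to be the bookkeeping around nonsmoothness rather than the algebra: since $\hat g=g+h^u$ and $\hat f=f+h^x$ are nondifferentiable, one must argue that the subgradient appearing in the $u$-subproblem's optimality condition is the same one that appears in~(\ref{subeqn_opt_2}) (which is why I phrase the converse direction as a subtraction with a common subgradient, consistent with the selection returned by the $u$-solver), and invoke Slater's condition for the sum rule that splits off the smooth penalty gradient. One also uses differentiability of $\hat b$ for the penalty-gradient identities above. Granted these points, the proof is a direct rearrangement of~(\ref{eqn_proof_iteration}) against~(\ref{subequations_opt}).
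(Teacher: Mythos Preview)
Your proposal is correct and follows essentially the same route as the paper: derive the stationarity condition of the $\hat x$-subproblem and combine it with the dual update to see that~(\ref{subeqn_opt_3}) holds at every iterate, then derive the stationarity condition of the $u$-subproblem and rewrite it via the dual update to obtain~(\ref{subeqn_opt_2}) plus a residual term, with~(\ref{subeqn_opt_1}) identified directly with the primal residual. The only discrepancy is a sign convention --- the paper sets $s^{j+1}=[\partial\hat b(u^{j+1})]^\top RB(\hxj-\hxjp)$ rather than your $RB(\hxjp-\hxj)$ --- which is immaterial for the lemma since only $s^{j+1}=0$ is at issue; your added remarks on subgradients and the converse direction are a useful gloss that the paper leaves implicit.
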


\begin{proof}
First consider condition (\ref{subeqn_opt_3}). Since $\hat{x} = \hat{x}^{j+1}$ is by definition the minimizer of $L(u^{j+1},\hat{x},\hat{y}^j)$, we have
\begin{equation*}
\nabla \hat{f}(\hat{x}^{j+1}) + B^\top \hyj + B^\top R (\hat{b}(u^{j+1}) + B \hxjp ) = 0
\end{equation*}
and hence the update law $\hyjp = \hyj + R(\hat{b}(u^{j+1}) + B\hxjp)$ implies $\nabla \hat{f}(\hxjp) + B^\top \hyjp = 0$ for all $j$. Next, consider (\ref{subeqn_opt_2}). By definition $u = u^{j+1}$ minimizes $L(u,\hxj,\hyj)$ and therefore $u^{j+1}$ necessarily satisfies
\[
\nabla \hat{g}(u^{j+1}) + [\partial \hat{b}(u^{j+1}) ]^\top \bigl(\hyj  + R (\hat{b}(u^{j+1}) + B\hxj)\bigr) = 0
\]
Hence the update law for $\hyjp$ gives
\[
\nabla \hat{g}(u^{j+1}) + [\partial \hat{b}(u^{j+1}) ]^\top \bigl( \hyjp + RB (\hxj - \hxjp) = 0
\]
and we have, for all $j$,
\[
\nabla \hat{g}(u^{j+1}) + [\partial \hat{b} (u^{j+1})]^\top \hyjp + s^{j+1} = 0
\]
Therefore, the ADMM iteration converges to a point satisfying the first order necessary conditions (\ref{subeqn_opt_1})-(\ref{subeqn_opt_3}) if and only if the residual variables defined by
\begin{align*}
r^{j+1} &= \hat{b}(u^{j+1}) + B \hxjp \\
s^{j+1} &= [\partial \hat{b}(u^{j+1}) ]^\top RB (\hxj - \hxjp)
\end{align*}
converge to zero.
\end{proof}

\subsection{Convergence}\label{section_convergence}

Let $(u^\O,\hat{x}^\O,\hat{y}^\O)$ denote a solution of (\ref{eqn_proof_opt}) that achieves the globally minimum value of the objective. Let $L_0(u,\hx,\hy) = \hat{f}(\hx) + \hat{g}(u) + \hy^\top (\hat{b}(u) + B \hx )$ and define $\S_c$ as the set
\[
\S_c \!=\! \bigl\{\hspace{-0.5pt} (u,\hx,\hy) : \| \hy - \hy^\O \hspace{-1pt}\|^2_{R^{-1}}  \! + \|B(\hx - \hx^\O\hspace{-1pt}) \|^2_R + \|r(u,\!\hx)\|^2_R \hspace{-0.5pt}\leq \hspace{-0.5pt} c \hspace{-0.5pt}\bigr\}
\]
for $c>0$, where $r(u,\hx) = \hat{b}(u) + B \hx$.
\begin{assumption}
$\S_c$, $(u^\O,\hx^\O,\hy^\O)$, $(u^j,\hx^j,\hy^j)$ are such that:
\begin{enumerate}[label=\alph*)]
\item iteration (\ref{eqn_proof_iteration}) is initialised at $(u^0,\hx^0,\hy^0) \in \S_c$, i.e.\\ 
$ c \geq \| \hy^0 - \hy^\O \|^2_{R^{-1}} + \|B(\hx^0 - \hx^\O \|^2_R + \|r(u^0,\hx^0)\|^2_R$,
\item $u = u^{j+1}$ in (\ref{eqn_proof_iteration}a) is the global minimiser of $L(u,\hx^j,\hy^j)$.
\end{enumerate}
\end{assumption}
\begin{theorem} \label{proposition_2}
\textit{$(u^j,\hx^j,\hy^j) \in \S_c$,
 for all $j \in \mathbb{Z}^+$, and $r^j \rightarrow 0$ and $s^j \rightarrow 0$ as $j\rightarrow \infty$.}
\end{theorem}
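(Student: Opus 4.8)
The plan is to follow the convergence argument in Section~3 of~\cite{Boyd2011}, but replacing its Lyapunov function by the potential $\Lambda(u,\hx,\hy) := \|\hy - \hy^\O\|^2_{R^{-1}} + \|B(\hx - \hx^\O)\|^2_R + \|r(u,\hx)\|^2_R$, whose $c$-sublevel set is exactly $\S_c$. Thus the first claim of the theorem, $(u^j,\hxj,\hyj)\in\S_c$ for all $j$, is equivalent to $\Lambda(u^j,\hxj,\hyj)\le c$ for all $j$, which I would prove by induction on $j$: the base case is Assumption~2a, and the inductive step is a one-step decrease $\Lambda^{j+1}\le\Lambda^j-\mu^{j+1}$, with $\mu^{j+1}\ge 0$ a quantity that additionally bounds the primal residual $r^{j+1}$ and the increment $B(\hxjp-\hxj)$.

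For the one-step decrease I would assemble, as in~\cite{Boyd2011}, upper bounds on $p^{j+1}-p^\O$ and on $p^\O-p^{j+1}$, writing $p^j := \hat f(\hxj)+\hat g(u^j)$ and $p^\O := \hat f(\hx^\O)+\hat g(u^\O)$. First, since $\hxjp$ is the minimiser of the convex function $\hx\mapsto L(u^{j+1},\hx,\hyj)$, convexity of $\hat f$ together with the stationarity identity $\nabla\hat f(\hxjp)+B^\top\hyjp=0$ from the proof of Lemma~\ref{proposition_1} gives $\hat f(\hxjp)-\hat f(\hx)\le-\langle\hyjp,B(\hxjp-\hx)\rangle$, to be used at $\hx=\hx^\O$ and $\hx=\hxj$. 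Second, and this is where the nonconvexity is handled, Assumption~2b states that $u^{j+1}$ is the \emph{global} minimiser of $L(u,\hxj,\hyj)$, so the value of that subproblem at $u^{j+1}$ does not exceed its value at $u^\O$ (using $\hat b(u^\O)=-B\hx^\O$) or at $u^j$; this exact comparison replaces the convexity argument for the first block, which is unavailable because $\hat b$ is nonlinear. Third, the first-order conditions~(\ref{subequations_opt}) at the global optimum, with convexity of $\hat f$ and $\hat g$, bound $p^\O-p^{j+1}$ from above, except that the nonlinearity of $\hat b$ produces a linearisation-error term $e^{j+1}:=\hat b(u^{j+1})-\hat b(u^\O)-\partial\hat b(u^\O)(u^{j+1}-u^\O)$ — nonnegative componentwise by convexity of the components of $\hat b$ — in place of the clean identity available in the convex case.

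Substituting the dual update $\hyjp=\hyj+R\,r^{j+1}$ into the sum of these inequalities and completing the square, exactly as in~\cite{Boyd2011}, should yield $\Lambda^{j+1}\le\Lambda^j-\mu^{j+1}$; the third term $\|r(u,\hx)\|^2_R$ of $\Lambda$ (equivalently the third term defining $\S_c$) is what supplies the slack needed to absorb $e^{j+1}$, so that monotonicity holds everywhere on $\S_c$. This closes the induction. Since $\{\Lambda^j\}$ is then nonincreasing and bounded below by $0$, we get $\sum_j\mu^{j+1}\le\Lambda^0\le c<\infty$, hence $\mu^{j+1}\to 0$, and therefore $r^j\to 0$ and $B(\hxjp-\hxj)\to 0$. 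Finally, $s^{j+1}=[\partial\hat b(u^{j+1})]^\top R\,B(\hxj-\hxjp)$, and $u^{j+1}$ necessarily lies in the input box (otherwise $\hat g(u^{j+1})=\infty$, contradicting minimality in~(\ref{eqn_proof_iteration}a)), so $\partial\hat b(u^{j+1})$ is uniformly bounded and $s^{j+1}\to 0$.

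The step I expect to be the main obstacle is the third ingredient and its interaction with the potential. For the nonconvex problem the global optimum need not be a saddle point of the unaugmented Lagrangian $L_0$, so the standard ``reverse'' inequality $p^\O-p^{j+1}\le(\hy^\O)^\top r^{j+1}$ is not available for free; one must instead control the linearisation error $e^{j+1}$ and show it is dominated by the $\|r\|^2_R$-slack for all iterates in $\S_c$. Verifying that, after completing the square, the remaining cross terms cancel as they do in the convex proof, and that Assumption~2b substitutes cleanly for the missing convexity of $\hat g$ composed with the nonlinear map $\hat b$, is the bulk of the remaining (routine but lengthy) algebra.
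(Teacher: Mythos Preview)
Your overall strategy matches the paper's exactly: the same Lyapunov function $V=\Lambda$, the same decomposition into an upper bound on $p^{j+1}-p^\O$ (via Assumption~2b for the $u$-block and the identity $\nabla\hat f(\hxjp)+B^\top\hyjp=0$ for the $\hx$-block) and a ``reverse'' bound on $p^\O-p^{j+1}$, followed by the dual update, completing the square, and the monotonicity argument $(\hyjp-\hyj)^\top B(\hxjp-\hxj)\le 0$ from convexity of $\hat f$.

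The one substantive divergence is in how you treat the reverse inequality. You anticipate a linearisation error $e^{j+1}$ arising from the nonlinearity of $\hat b$ and plan to absorb it using the $\|r\|^2_R$ term of $\Lambda$. The paper does \emph{not} do this: in its step~ii) it asserts directly that, since $(u^\O,\hx^\O,\hy^\O)$ satisfies the first-order conditions~(\ref{subequations_opt}), $u^\O$ is the global minimiser of $u\mapsto\hat g(u)+(\hy^\O)^\top\hat b(u)$, giving the clean saddle-point inequality $p^\O-p^{j+1}\le(\hy^\O)^\top r^{j+1}$ with no error term at all. (You may reasonably object that this implicitly requires $\hat g(u)+(\hy^\O)^\top\hat b(u)$ to be convex in $u$, hence that the multiplier $y^\O$ have the right sign given convexity of each $b_k$; the paper does not comment on this.) Accordingly, the role of the third term $\|r(u,\hx)\|^2_R$ in $V$ is \emph{not} what you describe: it is not slack for nonconvexity, but purely algebraic --- without it the decrease estimate would only yield $\|r^{j+1}-B(\hxjp-\hxj)\|^2_R\to 0$, whereas including it converts the bound into $V^{j+1}-V^j\le-\|r^j\|^2_R-\|B(\hxjp-\hxj)\|^2_R$, separating the two residuals. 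Under the paper's argument your absorption step is therefore unnecessary, and it is also not clear it would succeed as ``routine algebra'': $e^{j+1}$ scales with $u^{j+1}-u^\O$, which is not directly controlled by any term of $\Lambda^j$.
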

\begin{proof}
We first show that $V^j = V(u^j,\hx^j,\hy^j)$ is a Lyapunov function, where $V$ is defined by
\begin{align*}
V(u,\hx,\hy) = \| \hy - \hy^\O \|^2_{R^{-1}}  + \|B(\hx - \hx^\O )\|^2_R + \|r(u,\hx)\|^2_R .
\end{align*}
This part of the proof is split into three steps:
\begin{enumerate}[label = \roman*)]
\item The update law for $u^{j+1}$ can be equivalently written as
\begin{equation*}
u^{j+1} = \underset{u}{\arg \min}\!\left\{ \hat{g}(u) \!+\! ( \hyjp \hspace{-3pt}+\! RB(\hxj \!-\! \hxjp))^{\!\top} \hat{b}(u) \right\}
\end{equation*}
and, from Assumption 3(b) it follows that 
\begin{align*}
\hat{g}(u^{j+1}) &+ (\hat{y}^{j+1} + RB(\hxj - \hxjp))^\top \hat{b}(u^{j+1}) \\ 
&\leq \hat{g}(u^{\O}) + (\hat{y}^{j+1} + RB(\hxj - \hxjp))^\top \hat{b}(u^{\O}) .
\end{align*}
Similarly, $\nabla \hat{f}(\hxjp) + B^\top \hyjp = 0 $ implies that
\begin{equation*}
\hat{f}(\hxjp ) + (\hyjp )^\top B \hxjp \leq \hat{f}(\hat{x}^\O ) + (\hyjp )^\top B \hat{x}^\O .
\end{equation*}
Therefore, defining $p^{j+1} = \hat{f} ( \hxjp ) + \hat{g}(u^{j+1})$ and $p^\O = \hat{f}(\hat{x}^\O) + \hat{g}(u^\O)$, we obtain
\begin{align*}
& p^{j+1} - p^\O = \hat{f}(\hxjp) - \hat{f}(\hat{x}^\O) + \hat{g}(u^{j+1}) - \hat{g}(u^\O) \\
&\leq (\hyjp)^\top (B(\hat{x}^\O - \hxjp) + \hat{b}(u^\O) - \hat{b}(u^{j+1})) \\
&\quad + (\hxj - \hxjp )^\top B^\top R (\hat{b}(u^\O ) - \hat{b}(u^{j+1})) \\
&= -(\hyjp)^\top r^{j+1} \\ 
&\quad + (\hxj - \hxjp)^\top B^\top R (-r^{j+1} + B(\hxjp - \hat{x}^\O))
\end{align*}

\item Since $(u^\O,\hat{x}^\O,\hat{y}^\O)$ must satisfy the first order necessary conditions (\ref{subequations_opt}), it follows that $u=u^\O$ is the minimiser of $\hat{g}(u) + (\hat{y}^\O)^\top \hat{b}(u)$ and $\hx = \hx^\O$ is the minimiser of $\hat{f}(\hx ) + ( \hy^\O)^\top B \hx$. Therefore, we have, for all $u$ and $\hx$,
\begin{align*}
\hat{g}(u^\O ) + (\hy^\O)^\top \hat{b}(u^\O ) &\leq \hat{g}(u) + (\hy^\O )^\top \hat{b} (u) \\
\hat{f}(\hx^\O ) + (\hy^\O )^\top B \hx^\O &\leq \hat{f}(\hat{x}) + (\hy^\O )^\top B \hx ,
\end{align*} 
so that $\hat{f}(\hx^\O ) + \hat{g} ( u^\O) \leq \hat{f} (\hx ) + \hat{g} (u) + (\hy^\O )^\top (B\hx + \hat{b}(u))$ and hence $p^\O - p^{j+1} \leq (\hy^\O )^\top r^{j+1}$.

\item Combining the bounds on $p^\O - p^{j+1}$ in i) and ii) yields
\begin{multline}\label{eqn_proof_ineq}
(\hy^{j+1} - \hy^\O )^\top r^{j+1} - (\hxjp - \hxj )^\top B^\top R r^{j+1} \\
+ (\hxjp - \hxj )^\top B^\top R B (\hxjp - \hx^\O) \leq 0 .
\end{multline}
The first term in (\ref{eqn_proof_ineq}) can be simplified using the update law $\hyjp = \hyj + R r^{j+1}\!$ and completing the square:
\begin{align*}
& 2(\hyjp - \hy^\O )^\top r^{j+1} = 2(\hyj -\hy^\O)^\top r^{j+1} + 2 \| r^{j+1} \|^2_R \\
&= \| \hyj - \hy^\O\! + R r^{j+1} \| ^2 _{R^{-1}}\! - \| \hyj - \hy^\O \|^2_{R^{-1}} + \!\| r^{j+1} \|^2_R \\
&= \| \hyj - \hy^\O\! + \hy^{j+1}\! - \hy^j \| ^2 _{R^{-1}}\hspace{-3pt} - \!\| \hyj - \hy^\O \|^2_{R^{-1}}\hspace{-3pt} +\! \| r^{j+1} \|^2_R \\
&= \|\hy^{j+1} - \hy^\O \|^2_{R^{-1}}\! - \| \hy^j - \hy^\O \|^2_{R^{-1}}\! + \|r^{j+1} \|^2_R .
\end{align*}
To simplify the 2nd and 3rd terms in (\ref{eqn_proof_ineq}) we use
\begin{align*}
&\|r^{j+1} \|^2_R - 2(\hxjp - \hxj)^\top B^\top R r^{j+1} \\ 
&\qquad + 2(\hxjp - \hxj )^\top B^\top R B (\hxjp - \hx^\O) \\
&= \|r^{j+1} - B(\hxjp - \hxj) \|^2_R  \\
&\qquad + \| B(\hxjp - \hx^\O ) \|^2_R -\| B(\hxj - \hx^\O ) \|^2_R .
\end{align*}
Therefore (\ref{eqn_proof_ineq}) is equivalent to
\begin{multline*}
\| \hy^{j+1} - \hy^\O \|^2_{R^{-1}} - \| \hy^j - \hy^\O \|^2_{R^{-1}} \\
+ \|r^{j+1} - B(\hx^{j+1} - \hx^j) \|^2_R + \|B(\hxjp - \hx^\O ) \|^2_R \\
 - \| B(\hxj - \hx^\O ) \|^2_R \leq 0
\end{multline*}
which, using the definition of $V^j = V(u^j,\hx^j,\hyj)$, is equivalent to 
\begin{align*}
V^{j+1} - V^j &\leq - \|r^{j+1} - B(\hxjp - \hxj) \|^2_R \\
&\quad - \|r^j \|^2_R + \|r^{j+1}\|^2_R \\
& =  -\|r^j \|^2_R - \|B(\hxjp - \hxj) \|^2_R \\
&\quad +2(r^{j+1})^\top R B (\hxjp - \hxj) \\
& = -\| r^j \|^2_R - \| B(\hxjp - \hxj) \|^2_R \\
&\quad + 2(\hyjp - \hyj)^\top B (\hxjp - \hxj) .
\end{align*}
To bound the RHS of this expression, we recall from i) that $\hx = \hxj$ and $\hx = \hxjp$ are the minimizers of $\hat{f}(\hx) + (\hyj)^\top B\hx$ and $\hat{f}(\hx) + (\hyjp)^\top B \hx$, so that
\begin{align*}
\hat{f}(\hxj) + (\hyj)^\top B \hxj \leq \hat{f} ( \hxjp ) + (\hyj)^\top B \hxjp
\end{align*}
and
\begin{align*}
\hat{f}(\hxjp) + (\hyjp)^\top B \hxjp \leq \hat{f} ( \hxj ) + (\hyjp)^\top B \hxj
\end{align*}
By summing these two inequalities we obtain $(\hyjp - \hyj)^\top B (\hxjp - \hxj) \leq 0$, and it follows that 
\begin{equation*}
V^{j+1} - V^j \leq - \|r^j \|^2_R - \| B(\hxjp - \hxj ) \|^2_R .
\end{equation*}
\end{enumerate}
The positive invariance of $\S_c$ follows from its definition as a level set of $V(u,\hx,\hy)$ and from the fact that $V^j$ is monotonically non-increasing with $j$. Asymptotic convergence of the residuals $r^j$ and $s^j$ can be established by summing both sides of the inequality satisfied by $V^{j+1} - V^j$ over all $j \geq 0$:
\begin{equation*}
\sum_{j=0}^\infty \left( \|r^j\|^2_R + \|B(\hxjp - \hxj) \|^2_R \right) \leq V^0.
\end{equation*}
From this bound we can conclude that, as $j \rightarrow \infty$, $r^j \to 0$ and ${B(\hxjp - \hxj) \rightarrow 0}$, and hence also $s^j \rightarrow 0$.
\end{proof}

Lemma \ref{proposition_1} and Theorem \ref{proposition_2} imply that the ADMM iteration converges to a point satisfying the first order necessary optimality conditions, and this will be the global minimum if the problem is convex. The invariant nature of $\S_c$ also implies that even when the problem is nonconvex, the algorithm will converge to the global minimum if it is initialised within a sub-level set of $V$ that contains no local minima or maxima. This condition is, however, impossible to demonstrate \textit{a priori}, and if the algorithm is initialised outside of this set it could converge to a local minimum or even a local maximum point. We note finally that the Theorem~\ref{proposition_2} also demonstrates that the iteration necessarily terminates after a finite number of steps whenever the threshold $\epsilon$ on $r^{j+1}$ and $s^{j+1}$ is set to a fixed non-zero value.


\bibliographystyle{unsrt}
\bibliography{new_library}

\end{document}